\newtheorem{thm}{Theorem}
\newtheorem{cor}[thm]{Corollary}
\newtheorem{prop}[thm]{Proposition}
\title{A diophantine equation for sums of consecutive like powers }
\author{Simon Felten and Stefan M\"uller-Stach}
\address{Institut f\"ur Mathematik, Johannes Gutenberg Universit\"at Mainz}
\email{sfelten@students.uni-mainz.de, mueller-stach@uni-mainz.de}
\date{}
\begin{document}

\begin{abstract}
We show that the diophantine equation
\[
n^\ell+(n+1)^\ell + \cdots + (n+k)^\ell=(n+k+1)^\ell+ \cdots + (n+2k)^\ell 
\]
has no solutions in positive integers $k,n$ for all $\ell \ge 3$.
\end{abstract}

\maketitle

\section{Introduction}

The reader may have seen the following triangular pattern of equations for sums of consecutive positive integers:

\begin{align*}
1+2 & =  3 \\
4+5+6 & =  7+8 \\ 
9+10+11+12 &= 13+14+15\\ 
& \vdots 
\end{align*}

The numbers on the far left in each line are precisely all square numbers $n=k^2$ for $k \ge 1$. Furthermore, 
$k$ is equal to the number of summands on the right, and one less than the number of summands on the left. 
There is a similar sequence for squares, which was already studied by Georges Dostor \cite{dostor} in 1879: 

\begin{align*}
3^2+4^2 & =  5^2 \\
10^2+11^2+12^2 & =  13^2+14^2 \\
21^2+22^2+23^2+24^2 & =  25^2+26^2+27^2 \\
& \vdots
\end{align*}

In this case, the numbers on the far left are of the form $n^2$ with $n=k(2k+1)$ for $k \ge 1$.
There are geometric explanations for these sums via square dissections, see \cite{boardman}.  
One immediately wonders whether the pattern persists for cubes and higher powers. 
This leads to the diophantine equation

\begin{equation} \label{gleichung}
n^\ell+(n+1)^\ell + \cdots + (n+k)^\ell=(n+k+1)^\ell+ \cdots + (n+2k)^\ell 
\end{equation}

with unknowns $k$ and $n$. This equation is similar to other diophantine equations for 
sums of like powers \cite[page 209]{guy}, in particular the Erd\"os-Moser equation. 
For $\ell=3,4$ it was shown by Edouard Collignon \cite{collignon} in 1906 that the equation does not
have a solution in positive integers. A variant of this equation for $\ell=3$ is related to cube dissections \cite{frederickson}. 
Here, we prove uniformly that there are no positive integer solutions for $\ell \ge 3$ : 

\begin{thm} \label{maintheorem}
Equation (\ref{gleichung}) has no solutions in positive integers $k,n$ for any $\ell \ge 3$.
\end{thm}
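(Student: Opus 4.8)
The plan is to recenter the configuration at its midpoint, read off the order of magnitude of a hypothetical solution, and then obstruct it arithmetically. Writing $m = n+k$, the two ranges become symmetric about $m$: the left-hand side of (\ref{gleichung}) is $m^\ell + \sum_{j=1}^{k}(m-j)^\ell$ and the right-hand side is $\sum_{j=1}^{k}(m+j)^\ell$, so the equation is equivalent to
\[
m^\ell = \sum_{j=1}^{k}\left[(m+j)^\ell - (m-j)^\ell\right].
\]
Expanding each bracket by the binomial theorem and collecting the power sums $\sigma_r := \sum_{j=1}^{k} j^r$ gives
\[
m^\ell = 2\sum_{t\ge 0}\binom{\ell}{2t+1}\sigma_{2t+1}\,m^{\ell-1-2t},
\]
whose leading term is $2\binom{\ell}{1}\sigma_1\,m^{\ell-1} = \ell k(k+1)\,m^{\ell-1}$.

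Dividing by $m^{\ell-1}$, I would isolate
\[
m = \ell k(k+1) + \delta, \qquad \delta := 2\sum_{t\ge 1}\binom{\ell}{2t+1}\sigma_{2t+1}\,m^{-2t} > 0.
\]
Here $\delta$ is necessarily a positive integer; moreover each bracket above is even, so $m^\ell$ is even, forcing $m$ even, and since $k(k+1)$ is even this makes $\delta$ a positive \emph{even} integer, hence $\delta \ge 2$. The first step is therefore a clean size estimate: $m=n+k$ is pinned to $\ell k(k+1)+\delta$ with $\delta$ a small positive even integer. Bounding the tail by $\sigma_{2t+1} < (k+1)^{2t+2}/(2t+2)$ together with $m > \ell k(k+1)$ shows $\delta < 2$ for all sufficiently large $k$ once $\ell$ is bounded, which contradicts $\delta \ge 2$ and eliminates all but finitely many pairs $(k,\ell)$ in that range.

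To obstruct the remaining cases I would pass to congruences modulo powers of $m$ and to $2$-adic valuations. Reducing the exact identity modulo $m^2$ retains only the two lowest binomial terms and yields $m^2 \mid 2\sigma_\ell$ when $\ell$ is odd and $m \mid 2\ell\,\sigma_{\ell-1}$ when $\ell$ is even; since $m > \ell k(k+1)$, the first of these is already impossible whenever $2\sigma_\ell < m^2$, which settles $\ell = 3$ outright (there $2\sigma_3 = k^2(k+1)^2/2 < 9k^2(k+1)^2 < m^2$). For larger exponents I would instead compare $2$-adic valuations in $m^\ell = \sum_j D_j$ with $D_j := (m+j)^\ell - (m-j)^\ell$: one has $v_2(m^\ell) = \ell\,v_2(m)$, which is large, whereas for odd $\ell$ every odd $j$ gives $v_2(D_j)=1$ and every even $j$ gives $v_2(D_j)\ge \ell+1$, so the right-hand side has small valuation unless the odd-$j$ contributions cancel. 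Since there are $\lceil k/2\rceil$ odd indices, this immediately forces $v_2(S)=1$, contradicting $\ell\,v_2(m)\ge 3$, whenever $k\equiv 1,2 \pmod 4$, and reduces the rest to finer valuation bookkeeping.

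The main obstacle is uniformity in $\ell$. The non-integrality bound on $\delta$ degrades as $\ell$ grows—the leading tail term is asymptotically $(\ell-1)(\ell-2)/12\ell$, which exceeds $2$ once $\ell$ is large—while the divisibility conditions extracted modulo $m^2$ only become easier to satisfy as $\sigma_\ell$ grows, so neither mechanism alone closes every case. The crux is to show that the two analyses overlap; concretely, I expect the decisive step to be a clean uniform bound of the shape
\[
v_2\!\left(\sum_{j=1}^{k} D_j\right) < \ell\,v_2(m),
\]
valid in every residue class of $k$, which would force $v_2(S)\neq v_2(m^\ell)$ and rule out solutions for all $\ell\ge 3$ simultaneously. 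Proving that the cancellations among the $D_j$ can never conspire to raise the valuation all the way up to $\ell\,v_2(m)$ is the hardest part of the argument.
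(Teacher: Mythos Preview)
Your setup and the identification of the two key ingredients---a size estimate pinning $m$ near $\ell k(k+1)$ and a $2$-adic obstruction---match the paper's approach, but the proposal is explicitly incomplete: you leave the ``finer valuation bookkeeping'' for $k\equiv 0,3\pmod 4$, all of even $\ell$, and the uniformity in $\ell$ undone, and you say yourself that the hardest step is still open. Two concrete pieces are missing. First, the size estimate can be made sharp and then used in the opposite direction from the one you attempt: one proves $\ell K + a - b/K \le m \le \ell K + a$ uniformly in $k$, with $K=k(k+1)$, $a=\tfrac{(\ell-1)(\ell-2)}{12\ell}$, $b=\tfrac{2a^2}{\ell}$; since $\tfrac{\ell-3}{12}\le a<\tfrac{\ell-2}{12}$ and $m$ is an integer, this squeezes out $K<\tfrac{(\ell-2)^2}{12}$, and hence $\ell\ge 3f+3$ where $f=\nu_2(K)$. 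That inequality---not a bound ruling out $\delta\ge 2$---is the bridge between the analytic and the arithmetic halves of the argument, and it is exactly what resolves your stated ``main obstacle'' of uniformity in $\ell$.

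Second, the $2$-adic analysis becomes tractable when organized via the power sums $\sigma_r$ rather than the individual $D_j$: a theorem of MacMillan and Sondow gives $\nu_2(2\sigma_r)=2f-1$ \emph{exactly} for every odd $r\ge 3$, and a separate primitive-root argument on the odd primes dividing $m+1$ yields $g:=\nu_2(m)\ge \nu_2(\ell)+1$. With these and $\ell\ge 3f+3$ in hand, one can read off the valuation of every term in the binomial expansion and derive an impossible equality such as $\ell g=2f-1$ (for odd $\ell$) or $(\ell-1)g=2f-1+\nu_2(\ell)$ (for even $\ell$). Your parity split on $j$ recovers only the crudest case of this; without the exact valuation of $\sigma_r$ and the inequality $\ell\ge 3f+3$, the cancellations among the $D_j$ that you flag as the crux genuinely cannot be controlled, so the proposal as it stands does not close.
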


The rest of the paper contains a proof of this result. Our ideas are somewhat inspired by Runge's power series 
method \cite{runge}, for which quantitative versions are known \cite{walsh}. 
We would like to thank Harald Scheid for mentioning this diophantine problem, and Greg Frederickson 
for pointing out the history behind it.  
Our work has led to entry A234319 in the Online Encyclopedia of Integer Sequences (www.oeis.org), where the reader can find more
references.

\section{Proof of theorem \ref{maintheorem}}

To prove the theorem, we use the new variable $w=n+k$ and seek solutions in $k$ and $w$. 
Thus, equation (\ref{gleichung}) is equivalent to 

\begin{equation} \label{gleichung1}
w^\ell=\sum_{i=1}^k \left((w+i)^\ell - (w-i)^\ell \right) = \sum_{m=0}^\ell \binom{\ell}{m} w^{\ell-m} (1-(-1)^m) \sum_{i=1}^k i^m.
\end{equation}

Only summands for $m$ odd occur on the right side. Hence, we may rewrite equation \eqref{gleichung} as 

\begin{equation} \label{gleichung2}
w^\ell = 2 \sum_{m \text{ odd}} \binom{\ell}{m} w^{\ell-m} \sum_{i=1}^k i^m.
\end{equation}

If $\ell$ is even, all summands are divisible by $w$, since $\ell-m \ge 1$. 
Hence, we replace equation \eqref{gleichung} for even $\ell \ge 2$ by

\begin{equation} \label{gleichung3}
w^{\ell-1} = 2 \sum_{m \text{ odd}} \binom{\ell}{m} w^{\ell-m-1} \sum_{i=1}^k i^m.
\end{equation}

For $\ell=1$ and $\ell=2$ equations \eqref{gleichung2} resp. \eqref{gleichung3} are of the form 
$w=k(k+1)$ resp. $w=2k(k+1)$, and give exactly the solutions mentioned in the introduction.   
The Carlitz-von Staudt theorem \cite{moree} states that 

\begin{align*}
\sum_{i=1}^{k} i^m \equiv 0 \mod \frac{k(k+1)}{2}.
\end{align*}

for $m$ odd. Therefore, $k(k+1)$ is a divisor of $2\sum_{i=1}^{k} i^m$, hence of $w^\ell$.
Hence ${\rm rad}(k(k+1))$ divides $w$, and in particular $w$ is even.
(Recall that ${\rm rad}(x)$ is, by definition, the product of all positive primes dividing an integer $x$.) \\
Now we show that there is at most one solution $w=w(k)$ for any positive integer $k$ and any $\ell \ge 1$. 

\begin{prop}
For any $k,\ell \ge 1$ there is at most one positive $w=w(k)$ solving the equations \eqref{gleichung2} or \eqref{gleichung3}. 
\end{prop}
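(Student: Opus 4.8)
The plan is to reduce both equations to a single equation whose right-hand side is manifestly a strictly monotone function of the real variable $w$, so that it can meet the constant left-hand side at most once.

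First I would divide equation \eqref{gleichung2} by $w^\ell$ and equation \eqref{gleichung3} by $w^{\ell-1}$; both operations are legitimate because we seek only positive $w$, so $w \ne 0$. Since the powers of $w$ shift uniformly, in either case the result is the same equation
\begin{equation*}
1 = 2 \sum_{m \text{ odd}} \binom{\ell}{m} \left( \sum_{i=1}^k i^m \right) w^{-m}.
\end{equation*}
Thus it suffices to treat this single equation, with $k$ fixed.

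Next, writing $g(w)$ for the right-hand side viewed as a function of the real variable $w > 0$, I would observe that $g$ is strictly decreasing on $(0,\infty)$. Indeed, for every odd $m$ occurring in the sum the coefficient $2\binom{\ell}{m} \sum_{i=1}^k i^m$ is strictly positive, since $\binom{\ell}{m} > 0$ and $\sum_{i=1}^k i^m > 0$ for $k \ge 1$, while the function $w \mapsto w^{-m}$ is strictly decreasing on $(0,\infty)$ because $m \ge 1$. A positive linear combination of strictly decreasing functions is strictly decreasing, so $g$ is strictly decreasing.

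A strictly monotone function attains the value $1$ at most once, whence $g(w) = 1$ has at most one positive real solution $w$, and in particular at most one positive integer solution $w = w(k)$. The only point requiring care---and the closest thing to an obstacle---is confirming that no sign cancellation occurs among the summands; but this is immediate from the positivity of the binomial coefficients and of the power sums, the latter because every $i^m$ with $i,m \ge 1$ is positive. No analytic input beyond monotonicity is required, and the same argument covers both equations \eqref{gleichung2} and \eqref{gleichung3} simultaneously.
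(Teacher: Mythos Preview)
Your proof is correct and is essentially the same argument as the paper's: both rest on the observation that all non-leading coefficients of $f(k,w)$ share a sign opposite to the leading one. The paper packages this as a single sign change and invokes Descartes' rule of signs, while you divide through by the top power of $w$ and read off strict monotonicity of the resulting function directly; these are two phrasings of the same idea.
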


\begin{proof} By subtracting the right-hand side from the left-hand side in both equations 
\eqref{gleichung2} and \eqref{gleichung3}, one obtains a polynomial $f(k,w)$ with 

\begin{align*}
f(k,w)=& w^\ell-2 \sum_{m \text{ odd}} \binom{\ell}{m} w^{\ell-m} \sum_{i=1}^k i^m &\text{ for } \ell \text{ odd, }\\
f(k,w)=& w^{\ell-1}-2 \sum_{m \text{ odd}} \binom{\ell}{m} w^{\ell-m-1} \sum_{i=1}^k i^m  &\text{ for } \ell \text{ even. }
\end{align*}

In each case, $f$ has only one sign change. Therefore, Descartes' rule of signs %\cite[pg. 72]{fischer}
implies that for each $k \ge 1$ there is at most one positive $w=w(k)$ solving the equations.
\end{proof}

% For small $3 \le \ell \le 14$ one can show that there are no solutions, as
% \[
% \ell k(k+1) < w(k) < \ell k(k+1)+1. 
% \]
% In fact, $f(k,\ell k(k+1))$ and $f(k,\ell k(k+1)+1)$ have a different sign (see also Prop.~\ref{mainprop} below). 
% For $\ell \ge 15$ we need additional arguments. 
% There is also another way to exclude solutions for $\ell=3,4$. In these cases, 
% equations \eqref{gleichung2} and \eqref{gleichung3} define elliptic curves of Mordell-Weil rank zero and which have only torsion 
% points that do not give rise to positive integer solutions.

For the remaining arguments, we define integers $e,f,g$ by the $2$-adic valuations
\[
f:=\nu_2(k(k+1)), \quad g:=\nu_2(w), \quad e:=\nu_2(\ell).
\]
In the following, we will often write $2^f \mid \mid k(k+1)$ etc. instead. 

By a theorem of MacMillan and Sondow \cite[Thm. 1]{macson}, the power sums have $2$-adic valuations 
\[
\nu_2(2 \sum_{i=1}^k i^m)=2f-1, 
\]
independent of $m$ for all odd $m \ge 3$ and all $k \ge 1$.

\begin{prop} One has 
$g \ge e+1$ for all solutions $w$ and all $\ell$. 
\end{prop}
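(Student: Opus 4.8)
The plan is to compare the $2$-adic valuations of the two sides of the rewritten equation term by term, using the ultrametric principle that a sum has the same valuation as its unique summand of smallest valuation. First I would dispose of the case $\ell$ odd: there $e=\nu_2(\ell)=0$, so the assertion $g\ge e+1$ reads $g\ge 1$, which already follows from the Carlitz--von Staudt consequence recorded above, namely that $\mathrm{rad}(k(k+1))\mid w$ forces $w$ to be even. Hence the content is the even case, where I work with equation \eqref{gleichung3}.

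For even $\ell$ I would record the valuation of each summand on the right of \eqref{gleichung3}. The two inputs are the MacMillan--Sondow theorem, giving $\nu_2(2\sum_{i=1}^k i^m)=2f-1$ for odd $m\ge 3$ (while $2\sum_{i=1}^k i=k(k+1)$ has valuation $f$), and the elementary bound $\nu_2(\binom{\ell}{m})\ge e$ for every odd $m$, which comes from $m\binom{\ell}{m}=\ell\binom{\ell-1}{m-1}$ together with $\nu_2(m)=0$. The $m=1$ term then has valuation $e+(\ell-2)g+f$, the $m=\ell-1$ term has valuation exactly $e+2f-1$, and every remaining odd $m$ with $3\le m\le \ell-3$ contributes valuation at least $e+2g+2f-1$, hence strictly larger than $e+2f-1$ because $g\ge 1$. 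Thus on the right only two summands can realise the minimal valuation, namely the $m=1$ term and the $m=\ell-1$ term, while the left side $w^{\ell-1}$ has valuation $(\ell-1)g$. I would then equate $(\ell-1)g$ with whichever right-hand minimum is attained uniquely.

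When the $m=1$ term is the strict minimum, the ultrametric equality gives $(\ell-1)g=e+(\ell-2)g+f$, that is $g=e+f\ge e+1$, and we are done. The difficulty is the resonance case in which the extreme term $2\ell\sum_{i=1}^k i^{\ell-1}$ matches $w^{\ell-1}$ in valuation, forcing $(\ell-1)g=e+2f-1$; here the minimum is attained twice and valuations alone do not settle the question. Indeed, for $\ell=4$ the relation becomes $3g=2f+1$, solved by $g=1,\ f=1$ with $e=2$, which would violate $g\ge e+1$, so no purely valuation-theoretic argument can close this case.

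I expect this resonance case to be the main obstacle, and I would attack it with finer $2$-adic data. Since all remaining summands have strictly larger valuation, the two leading terms satisfy $w^{\ell-1}-2\ell\sum_{i=1}^k i^{\ell-1}\equiv 0 \pmod{2^{(\ell-1)g+1}}$; writing $w=2^g w'$ with $w'$ odd turns this into a congruence between $w'^{\,\ell-1}$ and the odd part of $2\ell\sum_{i=1}^k i^{\ell-1}$ to second $2$-adic order. The plan is to extract that odd residue from the sharper (residue, not merely valuation) content of the MacMillan--Sondow theorem, reduce \eqref{gleichung3} modulo $2^{(\ell-1)g+2}$ while tracking the first subleading term, and show that the resulting constraint is incompatible with $g\le e$. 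This is precisely the step where the real work lies.
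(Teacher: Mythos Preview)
Your approach is fundamentally different from the paper's and has a genuine gap. Notice first that your ``good case'' is empty: if the $m=1$ term were the strict minimum on the right of \eqref{gleichung3}, the ultrametric equality would give $g=e+f$, but feeding this back into the hypothesis $(\ell-2)g<f-1$ yields $(\ell-2)(e+f)<f-1$, which is impossible for even $\ell\ge 4$ since $e,f\ge 1$. Thus for every even $\ell\ge 4$ the $m=\ell-1$ term is the (weak) minimum, and the whole proof reduces to the ``resonance'' case you flag as unresolved. Your proposed fix --- extracting the next $2$-adic digit from the MacMillan--Sondow refinement and matching it against $w'^{\,\ell-1}$ --- is not carried out, and there is no evidence that the resulting congruence modulo $2^{(\ell-1)g+2}$ actually forces $g>e$; you would need to control the odd part of $2\ell\sum i^{\ell-1}$ \emph{and} the contribution of the $m=1$ term simultaneously, and no mechanism is given for that.

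The paper bypasses this $2$-adic bookkeeping entirely. It works not with the prime $2$ in $w$ but with the odd primes $p$ dividing $w+1$: reducing the original equation $\sum_{i=0}^k(w-i)^\ell=\sum_{i=1}^k(w+i)^\ell$ modulo such a $p$ (so $w\equiv -1$) collapses it to $k^\ell+(k+1)^\ell\equiv 0\pmod p$, whence $-1$ is an $\ell$-th power in $(\Z/p\Z)^\times$. Writing $\ell=2^e\ell'$ with $\ell'$ odd, this forces $-1$ to be a $2^e$-th power, hence $2^{e+1}\mid p-1$. Since every prime factor of the odd number $w+1$ is $\equiv 1\pmod{2^{e+1}}$, so is $w+1$ itself, giving $2^{e+1}\mid w$. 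This multiplicative argument on $w+1$ is the missing idea; no amount of $2$-adic refinement of \eqref{gleichung3} seems to recover it.
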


\begin{proof} We may assume that $\ell$ is even, since otherwise $e=0$.
The integer $w$ is even, as we have seen already. Let $p$ be any odd prime dividing $w+1$. 
Equation \eqref{gleichung} is equivalent to 
\[
\sum_{i=0}^k (w-i)^\ell = \sum_{i=1}^k  (w+i)^\ell. 
\]
This implies that 
\[
\sum_{i=0}^k (w-i)^\ell - \sum_{i=1}^k  (w+i)^\ell \equiv k^\ell+(k+1)^\ell \equiv 0 \mod p.
\]
This gives that $k+1 \not \equiv 0 \mod p$, since otherwise $k+1$ and $k$ would be both divisible by $p$.
We get
\[ 
-1 \equiv \left(\frac{k}{k+1}\right)^\ell \mod p. 
\] 
Let $\zeta$ be a primitive root modulo $p$. Then, for some integer $a$, we have 
$\zeta^{a2^e} \equiv -1 \mod p$, hence $a 2^e \equiv \frac{p-1}{2} \mod p-1$.
Let $h$ be such that $2^h \mid \mid p-1$. Then $2^{h-1} \mid \mid a 2^e$. Therefore,
$h-1 \ge e$ and $2^{e+1} \mid p-1$. Thus $p \equiv 1 \mod 2^{e+1}$. Since we proved this for all 
odd primes $p$ dividing $w+1$, and $w+1$ is odd, we conclude that 
$w+1 \equiv 1 \mod 2^{e+1}$. This shows that $g \ge e+1$. 
\end{proof}

\begin{prop} \label{mainprop} 
For every solution $(k,w(k))$ in non-zero positive integers and every $\ell \ge 1$ one has 
\[
\ell k(k+1) +\frac{(\ell-1)(\ell-2)}{12 \ell} - \frac{(\ell-1)^2(\ell-2)^2}{72 \ell^3 k(k+1)} 
\]
\[
\le w(k) \le \ell k(k+1) +\frac{(\ell-1)(\ell-2)}{12 \ell}. 
\]
\end{prop}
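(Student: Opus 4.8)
The plan is to work with the reduced equation obtained from \eqref{gleichung2} and \eqref{gleichung3} by dividing out the appropriate power of $w$: in either parity a positive solution satisfies
\[
w = 2\sum_{m \text{ odd}} \binom{\ell}{m} w^{1-m} \sum_{i=1}^k i^m = \ell\kappa + \sum_{\substack{m \ge 3 \\ m \text{ odd}}} 2\binom{\ell}{m} S_m\, w^{1-m},
\]
where I abbreviate $\kappa := k(k+1)$ and $S_m := \sum_{i=1}^k i^m$, the leading term $\ell\kappa$ coming from $m=1$ via $S_1 = \kappa/2$. Call the right-hand side $F(w)$. Since every exponent $1-m$ with $m\ge 3$ is at most $-2$ and every coefficient is positive, $F$ is strictly decreasing on $(0,\infty)$, so $\Phi(w):=w-F(w)$ is strictly increasing and has $w(k)$ as its unique positive zero, consistent with the previous proposition. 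Hence, writing $U$ and $L$ for the claimed upper and lower bounds, it suffices to prove the two inequalities $F(U)\le U$ and $F(L)\ge L$; monotonicity of $\Phi$ then forces $L\le w(k)\le U$. This fixed-point framing is the organising idea, reducing everything to explicit control of the correction sum.

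The decisive input is the exact identity $S_3 = \kappa^2/4$ together with $\binom{\ell}{3}=\ell(\ell-1)(\ell-2)/6$, which makes the $m=3$ term equal to $\tfrac{\ell(\ell-1)(\ell-2)\kappa^2}{12 w^2}=A\,\ell^2\kappa^2/w^2$ with $A:=\tfrac{(\ell-1)(\ell-2)}{12\ell}$; at $w\approx\ell\kappa$ this is exactly the correction $A$ in the statement. For the lower bound I would discard the positive terms with $m\ge 5$, so that $F(L)\ge \ell\kappa + A\,\ell^2\kappa^2/L^2$. Writing $B:=\tfrac{(\ell-1)^2(\ell-2)^2}{72\ell^3\kappa}$ and noting the clean relation $B = 2A^2/(\ell\kappa)$, the desired inequality $F(L)\ge L=\ell\kappa+A-B$ collapses, after using $\tfrac{1}{1-x}\ge 1+x$, to the algebraic statement $(A-B)^2\le 4A^2$, i.e. $B\le 3A$. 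Since $B=2A^2/(\ell\kappa)$ and $A<\ell/12$, one has $2A<\ell/6\le 3\ell\kappa$, so $B\le 3A$ holds trivially. Thus the lower bound falls out purely algebraically, using only $S_3$ exactly and positivity of the remaining terms.

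The upper bound is where the real work lies, because here the tail $\sum_{m\ge 5}$ has the unfavourable sign and must be bounded above. Plugging $w=U=\ell\kappa+A$ into $F$, the inequality $F(U)\le U$ is equivalent to the crux estimate $\mathrm{tail}(U)\le \tfrac{A^2(2\ell\kappa+A)}{U^2}$, whose right-hand side exceeds $2\ell\kappa A^2/U^2$. To control the tail $\mathrm{tail}(U)=\sum_{m\ge 5}T_m(U)$ with $T_m(U)=2\binom{\ell}{m}S_m U^{1-m}$, I would use two elementary facts: the ratio bound $S_{m+2}\le k^2 S_m$ (since $i^{m+2}\le k^2 i^m$) and $U^2\ge \ell^2\kappa^2$. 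These give $\tfrac{T_{m+2}(U)}{T_m(U)}\le \tfrac{1}{20(k+1)^2}$, so the tail is a rapidly convergent geometric-type series dominated by its first term; comparing $T_5(U)$ to $T_3(U)\le A$ in the same way yields $\mathrm{tail}(U)\le \tfrac{A}{19(k+1)^2}$. Finally, bounding $U^2\le \bigl(\tfrac{25}{24}\bigr)^2\ell^2\kappa^2$ (from $A/(\ell\kappa)\le 1/(12\kappa)\le 1/24$) reduces the crux estimate to a quadratic inequality in $\ell$ of the shape $c\,\ell^2-3\ell+2\ge 0$ with $c>\tfrac12$, valid for all $\ell\ge 5$. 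The cases $\ell=3,4$ are immediate because the tail is then empty (no odd $m$ satisfies $5\le m\le\ell$), and $\ell=1,2$ are degenerate with $A=0$ and $w=\ell\kappa$ exactly.

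The main obstacle I anticipate is precisely this tail-versus-gap comparison in the upper bound: the bounds on $w(k)$ are genuinely tight for small $k$ (for instance at $(\ell,k)=(5,1)$ the solution $w\approx 10.192$ lies between $L=10.192$ and $U=10.2$), so the estimates cannot be wasteful. The sharp power-sum facts $S_3=\kappa^2/4$ and $S_{m+2}\le k^2 S_m$ are essential; cruder substitutes such as the integral bound $S_m<\tfrac{(k+1)^{m+1}}{m+1}$, or the binomial bound $\binom{\ell}{m}\le \ell^m/m!$ applied when $m$ is close to $\ell$, lose too much and fail exactly in the delicate regime of small $k$ and moderate $\ell$. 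Keeping the binomial ratios intact and comparing term-by-term against the $m=3$ term is what makes the constants close.
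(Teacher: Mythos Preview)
Your proposal is correct and follows the same overall strategy as the paper: reduce to the one-variable equation $w=F(w)$, exploit monotonicity (the paper phrases this via Descartes' rule on $f(k,w)$), and then verify $\Phi(L)\le 0\le \Phi(U)$ by controlling the $m=3$ term exactly and the $m\ge 5$ tail approximately.

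The differences are tactical. For the lower bound, the paper proves the cubic inequality
\[
\Bigl(\ell K+a-\tfrac{b}{K}\Bigr)^3<\ell K\Bigl(\ell K+a-\tfrac{b}{K}\Bigr)^2+\ell^2 aK^2
\]
by a chain of equivalences (relegated to an appendix) reducing it to $(1-2a/(\ell K))^3<8(1-a/(\ell K))$. Your route---rewriting the needed inequality as $\frac{1}{1-2A/(\ell\kappa)}\ge (1+(A-B)/(\ell\kappa))^2$, using $1/(1-x)\ge 1+x$, and collapsing to $(A-B)^2\le 4A^2$---is a cleaner linearisation of the same algebraic fact and avoids the appendix computation entirely. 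For the upper bound, the paper bounds each tail term separately via $\binom{\ell}{m}\ell^{1-m}\le \frac{(\ell-1)(\ell-2)}{\ell\, m!}$ and $\sum_{i\le k}(i/k)^m\le \sum_{i\le k}(i/k)^3=K^2/(4k^3)$, then sums $\sum_{m\ge 5}1/m!<0.01$ and finishes with a Bernoulli-type estimate on $(1+\tfrac{11a}{12K\ell})^{-2}$; you instead compare consecutive odd terms via the ratio $T_{m+2}(U)/T_m(U)\le \frac{1}{(m+1)(m+2)(k+1)^2}$ and sum a geometric tail dominated by $T_5(U)\le A/(20(k+1)^2)$. Both work; your ratio approach is arguably more transparent and keeps the dependence on $k$ explicit, while the paper's $1/m!$ device makes the $\ell$-dependence disappear earlier. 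One small point: the final sentence ``$c>\tfrac12$, valid for all $\ell\ge 5$'' is slightly loose (at $\ell=5$ one actually needs $c\ge 13/25$), but your own bounds give $c\ge 1-(25/24)^2\cdot\tfrac{12}{38}>0.65$, so the conclusion stands.
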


\begin{proof} We may assume $\ell \ge 3$ and we use the abbreviations 
\[
K=k(k+1), \quad a=\frac{(\ell-1)(\ell-2)}{12 \ell}
\text{ and } b=\frac{(\ell-1)^2(\ell-2)^2}{72 \ell^3}.
\] 
Therefore, we have $b=\frac{2a^2}{\ell}$. It is easy to see that
\[
\left(1-\frac{2a}{\ell K}\right)^3 < 8 \left(1-\frac{a}{\ell K}\right),
\]
since $\frac{a}{\ell K}$ is small. By an easy computation (see appendix~\ref{app1}), this is equivalent to 

\[
\left(\ell K + a-\frac{b}{K}\right)^3 < \ell K \left(\ell K + a-\frac{b}{K}\right)^2 + \ell^2 a K^2.
\]

Let $w_0:=\ell K+a-\frac{b}{K}$. Then 
\[
w_0^3< \ell K w_0^2 +\ell^2 a K^2, 
\]
or, equivalently,
\[
w_0^\ell< \ell K w_0^{\ell-1} + \ell^2 a K^2 w_0^{\ell-3}, \, \text{ for } \ell \ge 3 \text{ odd,}
\]
\[
w_0^{\ell-1}< \ell K w_0^{\ell-2} + \ell^2 a K^2 w_0^{\ell-4}, \, \text{ for } \ell \ge 4 \text{ even.}
\]
The terms on the right-hand side are the terms for $m=1$ and $m=3$ in equations \eqref{gleichung2} and \eqref{gleichung3}. 
All other terms are also of the same sign, therefore we conclude that $f(k,w_0)<0$. This shows that $w_0$ is a lower bound. 

To obtain the upper bound, the idea is to divide equation \eqref{gleichung2} by $w^{\ell-1}$, 
and use the lower bound in all occurences of $w$ in the denominator.
The lower bound implies that
\[
w = \ell K +a - \frac{a(\ell-1)(\ell-2)}{6\ell^2K} \ge \ell K +\frac{11a}{12}  \ge \ell K
\]
for all $\ell$, since $K \ge 2$. If $\ell=3$ or $\ell=4$, then equation \eqref{gleichung2} gives
\[
w=\ell K + \frac{\ell^2 a K^2}{w^2} \le \ell K +a,
\]
and we are done. For $\ell \ge 5$ we write equation \eqref{gleichung2} as
\[
w=2 \sum_{m \text{ odd}} \binom{\ell}{m} \frac{\sum_{i=1}^k i^m}{w^{m-1}}= 
\ell K + \frac{\ell^2 a K^2}{w^2} + 2 \sum_{m \ge 5  \text{ odd}} \binom{\ell}{m} \frac{\sum_{i=1}^k i^m}{w^{m-1}}, 
\]
and estimate using $w \ge \ell K+\frac{11a}{12} \ge \ell K$
\begin{align*}
w & \le \ell K + \frac{\ell^2 a K^2}{(\ell K +\frac{11a}{12})^2} + 2 \sum_{m \ge 5 \text{ odd}} \binom{\ell}{m} \frac{\sum_{i=1}^k i^m}{(\ell K)^{m-1}}\\
& \le \ell K + \frac{a}{(1+\frac{11a}{12K\ell})^2} +  2 \sum_{m \ge 5 \text{ odd}} \binom{\ell}{m} \frac{k^m \sum_{i=1}^k (\frac{i}{k})^m}{(\ell K)^{m-1}}.
\end{align*} 

For $m \ge 5$, we use $\sum_{i=1}^k (\frac{i}{k})^m \le \sum_{i=1}^k (\frac{i}{k})^3 = \frac{K^2}{4k^3}$, and 
\[
\binom{\ell}{m} \ell^{1-m} \le \frac{(\ell-1)(\ell-2) \ell^{m-2} \ell^{1-m}}{m!}=\frac{(\ell-1)(\ell-2)}{\ell m!},
\]
so that we get

\begin{align*}
w & \le \ell K + \frac{a}{(1+\frac{11a}{12K\ell})^2}+ \frac{(\ell-1)(\ell-2)K^3}{2\ell k^3}  \sum_{m \ge 5  \text{ odd}} \frac{1}{(k+1)^m m!} \\
& \le \ell K + \frac{a}{(1+\frac{11a}{12K\ell})^2} + \frac{(\ell-1)(\ell-2)K^3}{2 \ell k^3 (k+1)^5}  \sum_{m \ge 5  \text{ odd}} \frac{1}{m!} \\
& \le \ell K + a \left( \frac{1}{(1+\frac{11a}{12K\ell})^2}+ \frac{6}{(k+1)^2} \sum_{m \ge 5} \frac{1}{m!} \right).
\end{align*}

One has
\[
\sum_{m \ge 5} \frac{1}{m!} \le  \exp(1)- 1 - 1 - \frac{1}{2} - \frac{1}{6}- \frac{1}{24} < 0.01.  
\]
For $\ell \ge 5$ one also has $\frac{a}{\ell} \ge \frac{1}{25}$. 
This implies, using Bernoulli's inequality and $K\ge 2$,
\[
 \frac{1}{(1+\frac{11a}{12 K\ell})^2} + \frac{6}{(k+1)^2} \sum_{m \ge 5  \text{ odd}} \frac{1}{m!}  
 \le \frac{1}{(1+\frac{11}{300 K})^2} + \frac{6}{K} \sum_{m \ge 5  \text{ odd}} \frac{1}{m!}
\]
\[
 \le \frac{1}{1+\frac{11}{150K}} + \frac{0.06}{K} \le \frac{K+0.06(1+\frac{11}{150K})}{K+\frac{11}{150}}
\le \frac{K+\frac{933}{15000}}{K+\frac{1100}{15000}}  < 1.
\]
Therefore, we have the upper bound $w \le \ell K +a$. 
\end{proof}

\begin{cor}
For every solution $(k,w(k))$ in positive integers and every $\ell \ge 3$ 
we have $k(k+1) < \frac{(\ell-2)^2}{12}$ and $3f+3 \le \ell$. 
\end{cor}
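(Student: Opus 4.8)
The plan is to turn the two-sided estimate of Proposition~\ref{mainprop} into an integrality constraint and then read off the $2$-adic information. Throughout I abbreviate $K=k(k+1)$, $P=(\ell-1)(\ell-2)$, $a=\frac{(\ell-1)(\ell-2)}{12\ell}=\frac{P}{12\ell}$ and $b=\frac{2a^2}{\ell}$, exactly as in the previous proof.

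First I would set $d:=w-\ell K$. Since $w$ and $\ell K$ are integers, $d\in\Z$, and Proposition~\ref{mainprop} says $a-\frac{b}{K}\le d\le a$. Multiplying by $12\ell$ and using $12\ell a=P$ together with $12\ell b=24a^2=\frac{P^2}{6\ell^2}$, this reads
\[
P-\frac{P^2}{6\ell^2 K}\le 12\ell d\le P .
\]
Hence $r:=P-12\ell d$ is an \emph{integer} with $0\le r\le \frac{P^2}{6\ell^2 K}$.

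The decisive step — the one I expect to do the real work — is to exclude the two smallest values $r=0$ and $r=1$. Modulo $\ell$ one has $P=\ell^2-3\ell+2\equiv 2$ and $P-1\equiv 1$, so for $\ell\ge 3$ neither $P$ nor $P-1$ is divisible by $\ell$. But $r=0$ would force $P=12\ell d$, hence $\ell\mid P$, and $r=1$ would force $P-1=12\ell d$, hence $\ell\mid P-1$; both are impossible. Therefore $r\ge 2$, and plugging this into $r\le\frac{P^2}{6\ell^2 K}$ gives
\[
K\le \frac{P^2}{12\ell^2}=\frac{(\ell-1)^2(\ell-2)^2}{12\ell^2}<\frac{(\ell-2)^2}{12},
\]
the last inequality because $(\ell-1)^2<\ell^2$. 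This is the first claim. I want to stress that the refinement $r\ge 2$ is essential: with only $r\ge 1$ one obtains the weaker bound $K\le\frac{(\ell-1)^2(\ell-2)^2}{6\ell^2}$, which exceeds $\frac{(\ell-2)^2}{12}$ by a factor close to $2$, so discarding the case $r=1$ is exactly what makes the estimate sharp enough.

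For the second claim I would combine $2^f\mid K$, hence $2^f\le K<\frac{(\ell-2)^2}{12}$, with a short contradiction argument. If $3f+3>\ell$, then, $3f$ and $\ell$ being integers, $3f\ge\ell-2$, so $2^f\ge 2^{(\ell-2)/3}$. With $t=\ell-2\ge 1$ it remains to verify the elementary scalar inequality $12\cdot 2^{t/3}>t^2$ for all real $t\ge 1$, which yields $2^f>\frac{(\ell-2)^2}{12}$ and contradicts the bound on $2^f$. Hence $3f+3\le\ell$. I expect the exclusion of $r=1$ to be the only genuinely delicate point; the rest is bookkeeping with the estimates of Proposition~\ref{mainprop}, and the final inequality $12\cdot 2^{t/3}>t^2$ is routine (one derivative shows its minimum, near $t=6/\ln 2$, is positive).
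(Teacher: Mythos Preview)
Your proof is correct and follows essentially the same route as the paper: squeeze the integer $d=w-\ell K$ into the interval $[a-b/K,\,a]$, use integrality to force $K\le\frac{(\ell-1)^2(\ell-2)^2}{12\ell^2}<\frac{(\ell-2)^2}{12}$, and then finish with an exponential-versus-quadratic inequality together with $2^f\le K$. The only difference is that the paper bypasses your mod-$\ell$ exclusion of $r\in\{0,1\}$ by the more direct observation that $12d$ is an \emph{integer} with $12d\le 12a=\ell-3+\tfrac{2}{\ell}<\ell-2$, hence $12d\le\ell-3$ (which is exactly your $r\ge2$); so the step you single out as ``genuinely delicate'' is in fact immediate.
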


\begin{proof} We know that $\ell K +a-\frac{b}{K} \le w(k) \le \ell K +a$ for every positive 
integral solution $w(k)$. But one also has 
\[
\frac{\ell-3}{12} \le a < \frac{\ell-2}{12}.
\]
As $w(k)$ is an integer, we get
\[
w \le \ell K + \frac{\ell-3}{12},
\]
and thus
\[
 \frac{\ell-3}{12} \ge a-\frac{b}{K}.
\]
From this one deduces that 
\[
K \le  \frac{(\ell-1)^2(\ell-2)^2}{12\ell^2}< \frac{(\ell-2)^2}{12}.
\]
This implies $\ell-2 > \sqrt{12 K}$. As $K \ge 2^f$, and $12 \cdot 2^f \ge 9 \cdot f^2$ for all $f \ge 1$, 
we have $\ell-2>3f$, and the claim follows. 
\end{proof}

\begin{proof}[Proof of Theorem \ref{maintheorem}]
The idea of the proof is to look at $2$-adic valuations in the terms of the equation
together with the above inequalities. 

First let $\ell$ be even. We may assume that $\ell \ge 6$ since $f \ge 1$ and $\ell \ge 3f+3$. 
Look at equation \eqref{gleichung3} modulo the integer $s=2^{(2f-1)+2g+e}$. 

We claim that all summands on the right-hand side for odd $m$ with 
$3 \le m \le \ell-3$ are $\equiv 0 \mod s$. This is true for $m=\ell-3$, since then 
$2^{2g} \mid w^{2}$ and one also has 
$2^e \mid \binom{\ell}{\ell-3}=\frac{\ell(\ell-1)(\ell-2)}{1 \cdot 2 \cdot 3}$ and $2^{2f-1} \mid 2 \sum_{i=1}^k i^{\ell-3}$. 

For $m$ odd with $3 \le m \le \ell-5$, one has as well $2^{2f-1} \mid \mid 2 \sum_{i=1}^k i^m$, and in addition 
$2^{g(\ell-m-1)} \mid \mid w^{\ell-m-1}$. Using $g \ge e+1$, one has $g(\ell-m-1) \ge 4g \ge 2g+e$ and therefore the assertion follows. 
Therefore, we obtain

\begin{align*}
w^{\ell-1} \equiv \ell w^{\ell-2} k(k+1) + 2 \ell   \sum_{i=1}^k i^{\ell-1} \mod s.  
\end{align*}

We have shown that $2^{e+2f-1} \mid \mid  2 \ell   \sum_{i=1}^k i^{\ell-1}$. Since $w$ is even, we have $g \ge 1$.
Therefore, $e+2f-1<(2f-1)+2g+e$. We also have $2^{e+(\ell-2)g+f} \mid  \mid \ell w^{\ell-2} k(k+1)$. Using $\ell \ge 3f+3$
we get 
\[
e+(\ell-2)g+f \ge e+(3f+1)g +f \ge (3f-1)g+2g+e \ge (2f-1)+(2g+e).
\]
Hence the first term on the right is also $\equiv 0$ modulo $s$. 
This implies $(\ell-1)g = e+2f-1$. This is a contradiction, using $\ell \ge 3f+3$ and $g \ge e+1$. \\
Assume now that $\ell \ge 5$ is odd (hence $e=0)$, and look at equation \eqref{gleichung2} modulo $s=2^{(2f-1)+2g}$.
Again, all summands on the right-hand side for odd $m$ with $3 \le m \le \ell-2$ are $\equiv 0 \mod s$, since
$2^{2f-1} \mid \mid 2 \sum_{i=1}^k i^m$, and $g(\ell-m) \ge 2g$ for $m \le \ell-2$.
Therefore, we obtain

\begin{align*}
w^{\ell} \equiv  \ell w^{\ell-1} k(k+1) + 2 \sum_{i=1}^k i^{\ell} \mod s.  
\end{align*}

We know that $2^{2f-1} \mid \mid  2 \sum_{i=1}^k i^{\ell}$. Since $w$ is even, we have $g \ge 1$.
Therefore, $2f-1<(2f-1)+2g$. We also have $2^{(\ell-1)g+f} \mid  \mid \ell w^{\ell-1} k(k+1)$. 
Using $\ell \ge 3f+3$ we get 
\[
(\ell-1)g+f \ge (3f+2)g+f \ge (2f-1)+2g
\]
and hence the first term on the right is also $\equiv 0$ modulo $s$. 
This implies $\ell g=2f-1$. This is a contradiction, using $\ell \ge 3f+3$. 
\end{proof}

\appendix 

\section{~} \label{app1} 

Here we supply the missing computation from the proof of Prop.~\ref{mainprop}:

\begin{align*}
& \left(1-\frac{2a}{\ell K}\right)^3 < 8 \left(1-\frac{a}{\ell K}\right) \\
\Leftrightarrow & \; a \left(1-\frac{2a}{\ell K}\right)^3 - 8a \left(1-\frac{a}{\ell K}\right) <0 \\
\Leftrightarrow & \left(1-\frac{2a}{\ell K}\right)^2 \left(2 \ell K +a\left(1 - \frac{2a}{\ell K}\right) \right) <2 \ell K \\
\Leftrightarrow & \left(a-\frac{b}{K}\right)^2 \left(2 \ell K +a - \frac{b}{K} \right)  <  b\ell^2 K \\
\Leftrightarrow & \left(a-\frac{b}{K}\right) \left(2 \ell K\left(a-\frac{b}{K}\right)  +\left(a-\frac{b}{K}\right)^2 \right)  <  b \ell^2 K  \\
\Leftrightarrow & \left(a-\frac{b}{K}\right)  \left(\ell^2 K^2 + 2 \ell K \left(a-\frac{b}{K}\right) + \left(a-\frac{b}{K}\right)^2 \right)  <  \ell^2 a K^2\\
\Leftrightarrow & \left(a-\frac{b}{K}\right) \left(\ell K + a-\frac{b}{K}\right)^2  <  \ell^2 a K^2 \\
\Leftrightarrow & \left(\ell K + a-\frac{b}{K}\right)^3 < \ell K \left(\ell K + a-\frac{b}{K}\right)^2 + \ell^2 a K^2.
\end{align*}

\end{document}